\documentclass[10pt]{article}

\setlength{\textheight}{8in}
\setlength{\textwidth}{450pt}
\setlength{\oddsidemargin}{14pt}

\usepackage{amsmath}
\usepackage{amssymb}
\usepackage{array}
\usepackage{amsthm}
\usepackage{enumerate}
\usepackage{graphicx}
\usepackage{graphics}
\usepackage{subfigure}

\newcommand{\R}{\mathbb{R}}

\newcommand{\half}{\frac{1}{2}}

\newcommand{\Sy}{\mathbb{S}}

\newcommand{\rank}{\text{rank}}

\newcommand{\graph}{\mathsf{G}}
\newcommand{\nodeset}{{\mathsf{N}}}
\newcommand{\edgeset}{{\mathsf{E}}}

\newcommand{\edgesetF}{{\mathsf{F}}}
\newcommand{\clique}{{\mathsf{C}}}

\newcommand{\cltree}{{\cal T}}
\newcommand{\clnodeset}{{\cal N}}
\newcommand{\cledgeset}{{\cal E}}

\newcommand{\diag}{\text{diag}}
\newcommand{\range}{\text{range}}

\newcommand{\beq}{\begin{equation}}
\newcommand{\eeq}{\end{equation}}
\newcommand{\bal}{\begin{aligned}}
\newcommand{\eal}{\end{aligned}}

\newcommand{\subblk}[3]{[{#1}_{#2}]_{#3}}

\newtheorem{assumption}{Assumption}
\newtheorem{theorem}{Theorem}
\newtheorem{defn}{Definition}
\newtheorem{lemma}{Lemma}

\author{Arvind U. Raghunathan and Andrew V. Knyazev\\
Mitsubishi Electric Research Laboratories\\
201 Broadway, Cambridge, MA 02139 \\
Email:\texttt{raghunathan@merl.com},\texttt{knyazev@merl.com}}

\bibliographystyle{plain}

\begin{document}

\title{Degeneracy in Maximal Clique Decomposition for Semidefinite Programs}

\maketitle

\begin{abstract}
Exploiting sparsity in Semidefinite Programs (SDP)  is critical to solving large-scale 
problems. The chordal completion based maximal clique decomposition is the preferred 
approach for exploiting sparsity in SDPs.  In this paper, we show that the 
maximal clique-based SDP decomposition is primal degenerate when the SDP has a 
low rank solution.  We also derive conditions under which the multipliers in the maximal 
clique-based SDP formulation is not unique.  Numerical experiments demonstrate that the 
SDP decomposition results in the schur-complement matrix  of the Interior Point 
Method (IPM) having higher condition number than for the original SDP formulation.
\end{abstract}


\section{Introduction}\label{sec:intro}

Semidefinite programming (SDP) is a subfield of convex optimization concerned with the 
optimization of a linear objective function over the intersection of the cone of 
symmetric positive semidefinite matrices with an affine space. 
Many problems in operations research and combinatorial 
optimization can be modeled or approximated as SDPs~\cite{deklerkbook,sdpreview}. 
For an SDP defined over the set of $n \times n$ symmetric matrices 
the number of unknowns in the problem grows as $O(n^2)$.  
Since the seminal work of Nesterov and Nemirovskii~\cite{nesterovbook87}, 
Interior Point Methods (IPMs) have become the preferred approach for solving SDPs.  
The complexity of the step computation in IPM is typically 
$O(mn^3 + m^2n^2)$~\cite{TodTohTut98}.  

Given the quadratic, cubic growth in $m,n$ 
of the computational cost respectively, it is imperative to exploit problem structure in solving large-scale 
SDPs.  For SDPs modeling practical applications, the data matrices involved are 
typically \emph{sparse}.  Denote by,  
$\nodeset = \{1,\ldots,n\}$ and by $\edgeset = \{ (i,j) \;|\; i \neq j,\; (i,j)-\text{th entry of some data 
matrix is non-zero}\}$. The set $\edgeset$, also called the \emph{aggregate sparsity 
pattern}, represents the non-zero entries in the objective and constraint matrices, 
that is the sparsity in the problem data. Consequently, only the entries of the matrix variable 
corresponding to the aggregate sparsity pattern are involved in the problem.  From the 
computational stand-point it is desirable to work only with such entries to reduce the 
number of unknowns in the problem from $O(n^2)$ to $O(|E|)$.  However, the 
semidefinite constraint couples all of the entries of the symmetric matrix.  
Fukuda et al~\cite{FukKojMur00} exploit the result of Grone et al~\cite{GroJohSa84} to 
decompose the SDP defined on $n \times n$ symmetric matrices into smaller sized matrices.  
Grone et al~\cite[Theorem 7]{GroJohSa84}  states that for a graph 
$\graph(\nodeset,\edgeset)$ that is chordal:  
the positive semidefinite condition on $n \times n$ matrix is equivalent to  
positive semidefinite condition on submatrices corresponding to the maximal cliques that 
cover all the nodes and edges in the graph $\graph(\nodeset,\edgeset)$.  
Nakata et al~\cite{NakFujFuk03} 
implemented the decomposition within a SDP software package SDPA~\cite{sdpa6} and 
demonstrated the scalability of the approach.  More recently, the authors of SDPA 
have also extended the implementation to take advantage of multi-core 
architectures~\cite{sdpa7,YamNak13}.  More recently, Kim and Kojima~\cite{KimKoj12} extended this 
approach for solving semidefinite relaxations of polynomial optimization problems.  

\subsection{Our Contribution}
In this paper, we study the properties of the \emph{conversion approach} 
of~\cite{FukKojMur00,NakFujFuk03} which converts the original SDP into an SDP with 
multiple sub-matrices and additional equality constraints.  We show that the 
SDP resulting from the {conversion approach} is \emph{primal degenerate} 
when the SDP solution has low-rank.  We show that this 
can occur even when the solution to the original SDP is \emph{primal non-degenerate}. 
Thus, this degeneracy is a consequence of the conversion approach.  We also derive 
conditions under which the dual multipliers are \emph{non-unique}. 
We demonstrate through numerical experiments that condition numbers of 
schur-complement matrix of IPM are much higher for the conversion approach as compared 
with the original SDP formulation.  To the best of our knowledge, this is the first result 
describing the degeneracy of the conversion approach.

The rest of the paper is organized as follows. \S\ref{sec:sdpclique} introduces the SDP 
formulation and the maximal clique decomposition.  The conversion approach~\cite{FukKojMur00} 
is described in \S\ref{sec:conversionsdp}.  
\S\ref{sec:primaldegen} proves the primal degeneracy and dual non-uniqueness of the 
conversion approach.  Numerical experiments validating the results are presented 
in \S\ref{sec:numericalexpts}, followed by conclusions in \S\ref{sec:conclusions}. 

\subsection{Notation}
In the following, $\R$ denotes the set of reals and $\R^n$ is the space of $n$ dimensional 
column vectors.  For a vector $x \in \R^n$, $\subblk{x}{}{i}$ denotes the $i$-th component of 
$x$ and $0_{n} \in \R^n$ denotes the zero vector, $e_i \in \R^n$ the vector with 
$1$ for the $i$-th component and $0$ otherwise.  The notation 
$\diag(\lambda_1,\ldots,\lambda_n)$ denotes a diagonal matrix with the values $\lambda_i$ 
on the  diagonal.  Given a vector $v \in \R^n$ and subset $\clique \subseteq \{1,\ldots,n\}$, 
$v_{\clique}$ denotes the subvector composed of $\subblk{v}{}{i}$ for $i \in \clique$. 
$\Sy^n$ denotes the set of $n \times n$ real symmetric matrices and 
$\Sy^n_+$ ($\Sy^n_{++}$) denotes the set of $n\times n$ real symmetric positive semi-definite 
(definite) matrices.  
Further, $A \succeq (\succ) 0$ denotes that $A \in \Sy^n_+ (\Sy^n_{++})$.  
For a matrix $A \in \Sy^n$,  $\subblk{A}{}{ij}$ denotes the $(i,j)$-th entry of the matrix $A$ and 
$\rank(A)$ denotes the rank of $A$.  For a pair of matrices $A_1,A_2 \in \Sy^n$, $\range\{A_1,A_2\}$ 
denotes the subspace of  symmetric matrices spanned by $A_1,A_2$. Denote by $\nodeset = \{1,\ldots,n\}$.  
The notation $A \bullet B$ denotes the standard trace inner product between symmetric matrices 
$A \bullet B = \sum_{i=1}^n\sum_{j=1}^n\subblk{A}{}{ij}\subblk{B}{}{ij}$ for $A,B \in \Sy^n$.  
For sets $\clique_s, \clique_t \subseteq \nodeset$ and $A \in \Sy^n$, 
$A_{\clique_s\clique_t}$ is a $|\clique_s| \times |\clique_t|$ submatrix of $A$ formed by 
removing rows and columns of $A$ that are not in $\clique_s$, $\clique_t$ respectively. 

\subsection{Background on Graph Theory~\cite{BlaPey93}}
In this paper we only consider undirected graphs.  
Given a graph $\graph(\nodeset,\edgesetF)$, a \emph{cycle in } $\edgesetF$ is a sequence of vertices 
$\{i_1,i_2,\ldots,i_q\}$ such that $i_j \neq i_{j'}$, $(i_j,i_{j+1}) \in \edgesetF$ and $(i_q,i_1) 
\in \edgesetF$.  The cycle in $\edgesetF$ with $q$ edges is called a cycle of length $q$.
Given a cycle $\{i_1,\ldots,i_q\}$ in $\edgesetF$, 
a \emph{chord} is an edge $(i_j, i_{j'})$ for $j,j'$ that are non-adjacent in the cycle.
A graph $\graph(\nodeset,\edgesetF)$ is said to be \emph{chordal} if every cycle of length 
greater than $3$ has a chord.
Given $\graph(\nodeset,\edgesetF)$, $\edgesetF' \supseteq \edgesetF$ is called 
a \emph{chordal extension} if the graph $\graph'(\nodeset,\edgesetF')$ is chordal.
Given a graph $\graph(\nodeset,\edgesetF)$, $\clique \subset \nodeset$ is called a 
\emph{clique} if it satisfies the property that $(i,j) \in \edgesetF$ for all $i,j \in \clique$. 
A clique $\clique$ is \emph{maximal} if there does not exist clique 
$\clique' \supset \clique$.  
For a chordal graph, the maximal cliques can be arranged as a tree, called  \emph{clique tree},  
$\cltree(\clnodeset,\cledgeset)$ in which $\clnodeset = \{\clique_1,\ldots,\clique_\ell\}$ and 
$(\clique_s,\clique_t) \in \cledgeset$ are edges between the cliques.  The interested reader is 
referred to~\cite{BlaPey93} for an in-depth exposition of chordal graphs and clique trees.

\subsection{Matrix Terminology~\cite{FukKojMur00}}
For a set $\edgesetF \subset \nodeset 
\times \nodeset$, $\Sy^n(\edgesetF)$ denotes the set of symmetric 
$n \times n$ matrices with only entries in $\edgesetF$ specified. 
A matrix $\bar{X} \in \Sy^n(\edgesetF)$ 
is called a \emph{symmetric partially specified matrix}. 
A matrix $X \in \Sy^n$ is called a 
\emph{completion} of  $\bar{X} \in \Sy^n(\edgesetF)$  if $\subblk{X}{}{ij} = \subblk{\bar{X}}{}{ij}$ 
for all $(i,j) \in \edgesetF$.  
A completion $X \in \Sy^n$ of $\bar{X} \in \Sy^n(\edgesetF)$ that is positive 
semidefinite (definite) is said to be a \emph{positive semidefinite (definite) completion} 
of $\bar{X}$.  

\section{Maximal Clique Decomposition in SDP}\label{sec:sdpclique}

Consider the following SDP:
\begin{equation}
\begin{aligned}
\min\limits_{X \in \Sy^n} \;& A_0\bullet X \\
\mbox{s.t.} \;& A_p\bullet X = b_p \;\forall\; p = 1,\ldots,m \\
\; & X \succeq 0
\end{aligned}\label{sdp}
\end{equation}
where $A_p \in \Sy^n$. Denote by $\edgeset = \{ (i,j) \;|\; i \neq j,\; \subblk{A}{p}{ij} \neq 0 \text{ for 
some } 0 \leq p \leq m\}$.  
The set $\edgeset$, also called the \emph{aggregate sparsity pattern}~\cite{FukKojMur00}, 
represents the non-zero entries in the objective and constraint matrices, that is 
the sparsity in the problem data.  Clearly, only the entries 
$\subblk{X}{}{jk}$ for $(j,k) \in \edgeset$ feature in the objective and equality 
constraints in~\eqref{sdp}.  In a number of practical applications, 
$|\edgeset| << n^2$.  From a computational standpoint, it is 
desirable to work only with $\subblk{X}{}{jk}$ for $(j,k) \in \edgeset$. In other words, 
we want to solve
\begin{equation}
\begin{aligned}
\min\limits_{\bar{X} \in \Sy^n(\edgeset)} \;& \sum\limits_{(i,j) \in \edgeset} 
\subblk{A_0}{}{ij} \subblk{\bar{X}}{}{ij} \\
\mbox{s.t.} \;& \sum\limits_{(i,j) \in \edgeset} \subblk{A_p}{}{ij} \subblk{\bar{X}}{}{ij} = 
b_p \;\forall\; p = 1,\ldots,m \\
\; & \bar{X} \text{ has a positive semidefinite completion.}
\end{aligned}\label{sdpsparse}
\end{equation}
The result of~\cite{GroJohSa84} provides the conditions under which such a 
completion exist.  We state this below in a form convenient for further 
development as in~\cite[Theorem\;2.5]{FukKojMur00}.

\begin{lemma}[{\cite[Theorem\;2.5]{FukKojMur00}}]\label{thm:cliquethm} 
Let $\graph(\nodeset,\edgesetF)$ be a chordal graph and let 
$\{\clique_1,\ldots.\clique_\ell\}$ be the family of all maximal cliques.  
Then, $\bar{X} \in \Sy^n(\edgesetF)$ has a positive semidefinite (definite) completion 
if and only if $\bar{X}$ satisfies 
\beq
\bar{X}_{\clique_s\clique_s} \succeq 0 (\succ 0) \;\forall\; s = 1,\ldots,\ell. \label{cliquepsdpd}
\eeq
\end{lemma}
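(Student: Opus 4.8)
The statement is an ``if and only if'' whose two directions are very different in character; the plan is to dispose of necessity in a line and to spend the real effort on sufficiency, which I would establish constructively by an induction walking along the clique tree. For \emph{necessity}, suppose $\bar{X}$ has a completion $X\succeq 0$ (resp.\ $X\succ 0$). Because $\clique_s$ is a clique, every pair $(i,j)$ with $i,j\in\clique_s$ satisfies $(i,j)\in\edgesetF$, so the entire principal submatrix indexed by $\clique_s$ is specified and equals $X_{\clique_s\clique_s}$. Every principal submatrix of a positive semidefinite (definite) matrix is itself positive semidefinite (definite), which is exactly~\eqref{cliquepsdpd}.

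For \emph{sufficiency} I would build a completion incrementally. Using the clique tree $\cltree(\clnodeset,\cledgeset)$, order the maximal cliques as $\clique_1,\ldots,\clique_\ell$ so that the \emph{running intersection property} holds: for each $s\ge 2$ the separator $S_s:=\clique_s\cap(\clique_1\cup\cdots\cup\clique_{s-1})$ is contained in a single earlier clique. Setting $V_k:=\clique_1\cup\cdots\cup\clique_k$, I would show by induction on $k$ that the partial matrix restricted to $V_k$ admits a completion $X^{(k)}\succeq 0$ (resp.\ $\succ 0$); the base case $k=1$ is precisely~\eqref{cliquepsdpd} for $s=1$, and $V_\ell=\nodeset$ then delivers the lemma.

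The inductive step is a one-block completion. Write $S=S_{k+1}$, let $Q=\clique_{k+1}\setminus S$ be the newly introduced vertices and $P=V_k\setminus S$. The running intersection property guarantees that every $P$--$Q$ pair is \emph{unspecified}: a specified such pair would lie in a common clique $\clique_r$ with $r>k+1$, forcing $S_r$ to contain one vertex from $\clique_{k+1}$ and one from an earlier clique and hence to straddle two earlier cliques, a contradiction. These $P$--$Q$ entries are therefore free to choose. Grouping indices as $(P,S,Q)$, the blocks $X_{PP},X_{PS},X_{SS}$ are supplied by $X^{(k)}$ and $X_{SS},X_{SQ},X_{QQ}$ by $\bar X_{\clique_{k+1}\clique_{k+1}}$; the two source matrices are positive semidefinite and agree on the shared block $X_{SS}$. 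In the definite case $X_{SS}\succ 0$, and I would set
\beq
X_{PQ}=X_{PS}\,X_{SS}^{-1}\,X_{SQ}. \label{eq:fill}
\eeq
The Schur complement of $X_{SS}$ in the extended matrix then loses its off-diagonal blocks and reduces to $\diag\left(X_{PP}-X_{PS}X_{SS}^{-1}X_{SP},\; X_{QQ}-X_{QS}X_{SS}^{-1}X_{SQ}\right)$, whose two blocks are positive definite exactly because $X^{(k)}\succ 0$ and $\bar X_{\clique_{k+1}\clique_{k+1}}\succ 0$. By the Schur complement criterion $X^{(k+1)}\succ 0$, closing the induction.

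The step I expect to be the real obstacle is the \emph{positive semidefinite} case, where the separator block $X_{SS}$ may be singular and the inverse in~\eqref{eq:fill} is undefined---a situation directly tied to the low-rank phenomenon studied in this paper. I would handle it by replacing the inverse with the Moore--Penrose pseudoinverse, taking $X_{PQ}=X_{PS}X_{SS}^{+}X_{SQ}$: positive semidefiniteness of the two source blocks forces the range conditions $\range(X_{SP})\subseteq\range(X_{SS})$ and $\range(X_{SQ})\subseteq\range(X_{SS})$, under which the generalized Schur complement identities remain valid, the off-diagonal blocks again vanish, and the two diagonal Schur complements stay positive semidefinite. Equivalently, one may perturb the specified diagonal by $\varepsilon I$ to make every clique block definite, apply the definite case, and let $\varepsilon\downarrow 0$, the pseudoinverse formula supplying the entrywise bounds needed to extract a limit in the closed cone $\Sy^n_+$.
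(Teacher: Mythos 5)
The paper gives no proof of this lemma at all: it is imported as a known result, cited to \cite[Theorem 2.5]{FukKojMur00}, which in turn rests on the positive (semi)definite completion theorem of Grone, Johnson, S\'a and Wolkowicz \cite{GroJohSa84}. So there is no in-paper argument to compare yours against; what you have reconstructed blind is essentially the classical proof from that literature: necessity because a clique's principal submatrix is fully specified and inherits (semi)definiteness, and sufficiency by ordering the maximal cliques to satisfy the running intersection property and performing one-block completions via Schur complements, with the singular-separator case handled either by the Moore--Penrose pseudoinverse (the range inclusions $\range(X_{SP})\subseteq\range(X_{SS})$ and $\range(X_{SQ})\subseteq\range(X_{SS})$ do hold for positive semidefinite source blocks, so the generalized Schur-complement criterion applies) or by the $\varepsilon I$ perturbation-and-limit argument (compactness follows simply because diagonal entries bound off-diagonal entries of a PSD matrix). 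Both routes are sound, and the overall structure is correct.

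One step is stated too loosely to stand as written: your justification that every $P$--$Q$ pair is unspecified. You argue that a specified pair $(i,j)$ would lie in a common maximal clique $\clique_r$ with $r>k+1$ whose separator $S_r$ ``straddles two earlier cliques, a contradiction.'' But RIP only places $S_r$ inside a \emph{single} earlier clique $\clique_{r'}$ with $r'<r$, and that clique may itself have index greater than $k+1$ and contain both $i$ and $j$ --- no contradiction yet. The repair is routine: take $r$ \emph{minimal} among indices of cliques containing both endpoints. Any such clique has index greater than $k+1$ (an index at most $k$ would force $j\in V_k$, contradicting $j\in Q$; index exactly $k+1$ would force $i\in S$, contradicting $i\in P$), so $i\in V_k\subseteq V_{r-1}$ and $j\in\clique_{k+1}\subseteq V_{r-1}$ give $i,j\in S_r$, and RIP then yields a clique of strictly smaller index containing both endpoints, contradicting minimality. (Alternatively, invoke the separator property of clique trees directly: with a parent-first ordering, $S=\clique_{k+1}\cap\clique_{\mathrm{parent}}$ separates the vertices appearing only in the subtree below $\clique_{k+1}$ from the rest of the graph.) With that refinement the induction closes and your proof is complete.
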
  

Using Lemma~\ref{thm:cliquethm} Fukuda et al~\cite{FukKojMur00} proposed the 
\emph{conversion approach} which we describe next. 

\section{Conversion Approach}\label{sec:conversionsdp}
Given the graph $\graph(\nodeset,\edgeset)$, with $\edgeset$ the aggregate sparsity 
pattern of SDP~\eqref{sdp}, the conversion approach~\cite{FukKojMur00} proceeds by : (a) computing a chordal extension $\edgesetF \supseteq \edgeset$; (b) the set of maximal cliques 
$\clnodeset = \{\clique_1,\ldots,\clique_\ell\}$ of the graph $\graph(\nodeset,\edgesetF)$ 
are identified; (c) the clique tree $\cltree(\clnodeset,\cledgeset)$ is computed; 
and (d) a SDP is posed in terms of matrices defined on the set of maximal 
cliques that is equivalent to SDP in~\eqref{sdp}. 
Additional equality constraints are introduced to equate the overlapping entries in the 
maximal cliques.  
Prior to stating the SDP formulation we introduce notation that facilitates further 
development.  Denote,
\beq\bal
\sigma_s :&\; \nodeset \rightarrow \{1,\ldots,|\clique_s|\}  \text{ mapping the original} \\
&\; \text{indices to the ordering in the clique } \clique_s \\
\subblk{A_{s,p}}{}{\sigma_s(i)\sigma_s(j)} =&\; \left\{ \bal 
\subblk{A_p}{}{ij} &\; \text{if } s = \min\{t \; |\; (i,j) \in \clique_t\} \\
0 		             &\; \text{otherwise}
\eal\right. \\
E_{s,ij} =&\; \half \left(e_{\sigma_s(i)} e_{\sigma_s(j)}^T + e_{\sigma_s(j)} e_{\sigma_s(i)}^T 
\right) \forall i,j \in \clique_s \\
(s,t) \in&\; \cltree \iff (\clique_s,\clique_t) \in  \cledgeset \\
\clique_{st} =&\; \clique_s \cap \clique_t 
\eal\eeq
where $e_{\sigma_s(i)} \in \R^{|\clique_s|}$.  
The SDP in~\eqref{sdp} can be equivalently posed using the above notation as,
\begin{equation}
\begin{aligned}
\min\limits_{X_s \in \Sy^{|\clique_s|}} \;& \sum\limits_{s=1}^{\ell}A_{s,0}\bullet X_s && \\
\mbox{s.t.} \;& \sum\limits_{s=1}^\ell A_{s,p}\bullet X^s = b_p &&\forall\; p = 1,\ldots,m \\
\;&  E_{s,ij}\bullet X_s = E_{t,ij}\bullet X_t && \forall\; i \leq j, i,j \in \clique_{st}, \\
\;& && (s,t) \in \cledgeset \\
\;& X_s \succeq 0 && \forall\; s = 1,\ldots,\ell.
\end{aligned}\label{sdpconv}
\end{equation}
We refer to the SDP in~\eqref{sdpconv} as the \emph{conversion SDP}.  
In the following we present and analyze an analogue of the conversion approach for linear 
programming.  The analysis shows that primal degeneracy results for the linear program under 
certain assumptions.  Formal proofs for primal degeneracy of conversion SDP are left for \S\ref{sec:primaldegen}.  

\subsection{Intuition for Primal Degeneracy}

To motivate the primal degeneracy of conversion SDP we 
provide a conversion approach inspired decomposition for linear programs (LPs).  
Consider a LP of the form,
\begin{equation}
\begin{aligned}
\min\limits_{x \in \R^n}\;&a_0^Tx \\
\text{s.t.}\;&a_p^Tx = b_p \;\forall\; p = 1,\ldots,m \\
\;& x \geq 0
\end{aligned}\label{lp}
\end{equation}
where  $a_p \in \R^n$ and $b \in \R^m$.  Suppose we decompose the LP \eqref{lp} 
using the sets in $\{\clique_1,\ldots,\clique_\ell\}$ as,
\begin{equation}
\begin{aligned}
\min\limits_{x_s \in \R^{|\clique_s|}}\;&\sum\limits_{s=1}^la_{0,s}^Tx_s \\
\text{s.t.}\;&\sum\limits_{s=1}^la_{s,p}^Tx_s = b_p \;\forall\; p = 1,\ldots,m \\
\;& 
\subblk{x_{s}}{}{\sigma^{\mathrm{LP}}_s(i)} = 
\subblk{x_t}{}{\sigma^{\mathrm{LP}}_t(i)}  
\;\forall\; i \in \clique_{st}, (s,t) \in \cledgeset  \\
\;& x_s \geq 0 \;\forall\; s = 1,\ldots,\ell \\
\end{aligned}\label{lpconv}
\end{equation}
where 
\[\bal
& \sigma^{\mathrm{LP}}_s : \nodeset \rightarrow \{1,\ldots,|\clique_s|\} \\
& \subblk{a_{s,p}}{}{\sigma^{\mathrm{LP}}_{s}(i)} = \left\{ \begin{array}{cl} 
\subblk{a_p}{}{i} & \text{ if } s = \min \{ t | i \in \clique_t \} \\
0 & \text{ otherwise}  
\end{array} \right. .
\eal\]
With the above definition of the matrices it is easy to see that the LPs in~\eqref{lp} and~
\eqref{lpconv} are equivalent.  Further, if $x^*$ is an optimal solution to LP~\eqref{lp} then, 
$x^*_s = x^*_{\clique_s}$ is optimal for~\eqref{lpconv}.  
Suppose, there exists $i \in \clique_s \cap \clique_t$ for which $\subblk{x^*}{}{i} = 0$ then, the 
set of constraints 
\[\bal
\subblk{x_s}{}{\sigma^{\mathrm{LP}}_s(i)} = 
\subblk{x_t}{}{\sigma^{\mathrm{LP}}_t(i)} \\
\subblk{x_s}{}{\sigma^{\mathrm{LP}}_s(i)} = 0,  
\subblk{x_t}{}{\sigma^{\mathrm{LP}}_t(i)} = 0
\eal\]
are linearly dependent.  The linear dependency of the constraints can be avoided if the nonnegative bounds on shared entries are enforced exactly once for each index $i$.  
For example, the non negativity constraints in~\eqref{lpconv} can be enforced for each 
$i \in \nodeset$:
\[
\subblk{x_{s}}{}{\sigma^{\mathrm{LP}}_s(i)} \geq 0 \text{ if } s = \min \{ t | i \in \clique_t \}.
\]
In summary the degeneracy occurs due to a shared element activating the bound at the solution.  
In a direct analogy, the conversion SDP in \eqref{sdpconv} is primal degenerate when, 
$\rank(X_{\clique_{st}\clique_{st}}) < |\clique_{st}|$.  This degeneracy is directly attributable to 
the duplication of the semidefinite constraints for the submatrix 
$X_{\clique_{st}\clique_{st}}$ in both 
$X_s \succeq 0$ and $X_t \succeq 0$ for every pair of $(s,t) \in \cltree: s \neq t$.  
Unfortunately, the duplication of the semidefinite constraints cannot be avoided in the case of 
SDP without losing the linearity.  We provide formal arguments for the degeneracy and dual 
multiplicity of the conversion SDP~\eqref{sdpconv} in the following section. 

\section{Primal Degeneracy \& Dual Non-uniqueness of Conversion Approach}
\label{sec:primaldegen}

We review the conditions for primal non-degeneracy and dual uniqueness
 for the SDP~\eqref{sdp} introduced by Alizadeh et al~\cite{AliHaeOve97} in 
 \S\ref{sec:primalnondegensdp}.  We also extend this notion to that of the conversion SDP~\eqref{sdpconv}.  
 \S\ref{sec:sdpconvdegen} proves the primal  degeneracy result for the conversion approach 
 and \S\ref{sec:sdpconvmultidual} proves the dual non-uniqueness.  

\subsection{Primal Nondegeneracy and Dual Uniqueness in SDPs}
\label{sec:primalnondegensdp}

Suppose $X \in \Sy^n$ with $\rank(X) = r$ with eigenvalue decomposition 
$X = Q\diag(\lambda_1,\ldots,\lambda_r,0,\ldots,0)Q^T$ then, the tangent space  
${\mathbb T}_X$ of rank-$r$ symmetric matrices is 
\beq
{\mathbb T}_X = \left\{ \left. Q \begin{bmatrix} U & V \\ V^T & 0 \end{bmatrix}Q^T  \right| 
U \in \Sy^{r}, V \in \R^{r \times (n-r)}  
\right\} \label{tangentspace}
\eeq
and the space orthogonal to ${\mathbb T}_X$ is given by,
\beq
{\mathbb T}^{\perp}_X = \left\{ \left. Q \begin{bmatrix} 0 & 0 \\ 0 & W \end{bmatrix}Q^T  \right| 
W \in \Sy^{n-r} \right\}. \label{orthtangentspace}
\eeq
The null space of equality constraints ${\mathbb N}_A$ is,
\beq
{\mathbb N}_A = \left\{ Y \in \Sy^n | A_p \bullet Y = 0 \;\forall\; p = 1,\ldots,m\right\}
\label{nullspace}
\eeq
and the space orthogonal to ${\mathbb N}_A$ is given by,
\beq
{\mathbb N}^{\perp}_A = \text{span}\{\{A_p\}_{p=1}^{m}\}. 
\label{orthnullspace}
\eeq

\begin{defn}[\cite{AliHaeOve97}]\label{def:nondegen} Suppose $X$ is primal feasible for \eqref{sdp} with 
$\rank(X) = r$, then $X$ is primal nondegenerate if the following equivalent conditions hold:
\begin{subequations}\label{nondegen}
\begin{align}
 {\mathbb T}_X + {\mathbb N}_{A} &= \Sy^n 	\label{nondegen-fp} \\
 {\mathbb T}^{\perp}_X \cap {\mathbb N}^{\perp}_{A} &= \{\emptyset\} \label{nondegen-orth} \\
\sum\limits_{p=1}^m \alpha_p A_p &\in {\mathbb T}^{\perp}_X \implies \alpha_p = 0 \,\forall\, p = 1,\ldots,m. \label{nondegen-licq}
\end{align}
\end{subequations}
\end{defn}

\begin{lemma}[{\cite[Theorem\;2]{AliHaeOve97}}]
Suppose $X^* \in \Sy^n$ is primal nondegenerate and optimal for \eqref{sdp}.  Then the optimal 
dual multipliers $(\zeta^*,S^*)$, for the equality and positive semidefinite constraints respectively, 
satisfying the first order optimality conditions~\eqref{firstoptsdp} are unique,
\beq\bal
A_0 + \sum\limits_{p=1}^m\zeta^*_pA_p - S^* = &\; 0 \\
A_p \bullet X^* =&\; b_p \;\forall\; p = 1,\ldots,m \\
X^*,S^* \succeq 0, X^*S^* =&\; 0.
\eal\label{firstoptsdp}\eeq
\end{lemma}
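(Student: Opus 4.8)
The plan is to prove uniqueness of the dual pair $(\zeta^*, S^*)$ by reducing it to the three-condition characterization of primal nondegeneracy in Definition~\ref{def:nondegen}, particularly the form~\eqref{nondegen-licq}. First I would suppose, toward contradiction, that there are two dual solutions $(\zeta^*, S^*)$ and $(\tilde{\zeta}, \tilde{S})$ both satisfying the first-order optimality conditions~\eqref{firstoptsdp}. Subtracting the two stationarity equations yields
\beq
\sum_{p=1}^m (\zeta^*_p - \tilde{\zeta}_p) A_p - (S^* - \tilde{S}) = 0, \label{eq:dualdiff}
\eeq
so writing $\alpha_p = \zeta^*_p - \tilde{\zeta}_p$ gives $\sum_p \alpha_p A_p = S^* - \tilde{S}$. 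The goal is then to force $\alpha_p = 0$ for all $p$ and $S^* = \tilde{S}$.

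The key step is to show that the difference $S^* - \tilde{S}$ lies in the orthogonal tangent space ${\mathbb T}^\perp_{X^*}$. This is where complementarity enters: both $X^* S^* = 0$ and $X^* \tilde{S} = 0$ hold, so $\range(S^*)$ and $\range(\tilde{S})$ lie in the null space of $X^*$. In the eigenbasis $Q$ of $X^*$, writing $X^* = Q\,\diag(\lambda_1,\ldots,\lambda_r,0,\ldots,0)\,Q^T$, the complementarity condition forces any dual-feasible $S$ to have the block form $Q\begin{bmatrix} 0 & 0 \\ 0 & W \end{bmatrix}Q^T$, since the positive eigendirections of $X^*$ must be annihilated by $S \succeq 0$. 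Hence each of $S^*$ and $\tilde{S}$ individually lies in ${\mathbb T}^\perp_{X^*}$ as defined in~\eqref{orthtangentspace}, and therefore so does their difference. Combined with~\eqref{eq:dualdiff}, this shows $\sum_p \alpha_p A_p \in {\mathbb T}^\perp_{X^*}$.

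At this point primal nondegeneracy delivers the conclusion. Since $X^*$ is primal nondegenerate, condition~\eqref{nondegen-licq} applies directly: $\sum_p \alpha_p A_p \in {\mathbb T}^\perp_{X^*}$ implies $\alpha_p = 0$ for all $p$, i.e.\ $\zeta^* = \tilde{\zeta}$. Substituting back into~\eqref{eq:dualdiff} then gives $S^* = \tilde{S}$, establishing uniqueness of the full dual pair.

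I expect the main obstacle to be the careful justification that complementarity $X^* S = 0$ together with $S \succeq 0$ pins $S$ to the block structure of ${\mathbb T}^\perp_{X^*}$, rather than merely placing $\range(S)$ in $\ker(X^*)$. The argument is that $X^* S = 0$ gives $\range(S) \subseteq \ker(X^*) = \range(Q_{\cdot,r+1:n})$, and symmetry of $S$ with $S \succeq 0$ then forces the entire row/column space associated with the leading $r$ eigendirections of $X^*$ to vanish, yielding precisely the $\begin{bmatrix} 0 & 0 \\ 0 & W \end{bmatrix}$ form; this needs the positive semidefiniteness of $S$ and not just the range condition, so I would state that step explicitly rather than treat it as routine. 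Everything else reduces to linear algebra in the fixed basis $Q$ and the definitional equivalences already recorded in~\eqref{tangentspace}--\eqref{orthnullspace}.
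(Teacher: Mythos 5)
Your proof is correct, but note that the paper itself supplies no argument for this lemma: it is stated as a known result with a citation to \cite[Theorem\;2]{AliHaeOve97}, so the only comparison available is against the standard argument behind that cited theorem --- which is essentially what you have reconstructed. Your chain is sound: subtracting the two stationarity equations gives $\sum_{p=1}^m \alpha_p A_p = S^* - \tilde{S}$ with $\alpha_p = \zeta^*_p - \tilde{\zeta}_p$; complementarity places each of $S^*$, $\tilde{S}$, and hence their difference, in ${\mathbb T}^{\perp}_{X^*}$; and condition~\eqref{nondegen-licq} then forces $\alpha_p = 0$ for all $p$, after which $S^* = \tilde{S}$ follows from the same equation. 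One small correction to your closing remark: positive semidefiniteness of $S$ is not actually needed to pin down the block structure, and your instinct that the range condition alone is insufficient resolves in the other direction. If $X^*S = 0$ and $S = S^T$, write $\hat{S} = Q^T S Q$ and $\Lambda = \diag(\lambda_1,\ldots,\lambda_r,0,\ldots,0)$; then $\Lambda \hat{S} = 0$ forces the first $r$ rows of $\hat{S}$ to vanish (since $\lambda_i > 0$ for $i \leq r$), and symmetry of $\hat{S}$ forces the first $r$ columns to vanish as well, which is exactly the form in~\eqref{orthtangentspace}. So symmetry plus complementarity suffices, and $S \succeq 0$ is available but idle in that step.
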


We extend the notions of non-degeneracy to the conversion SDP~\eqref{sdpconv} in the following. 
The tangent space for the matrices $X_s$ is defined as,
\beq
{\mathbb T}_{s,X} = \left\{ \left. Q_s \begin{bmatrix} U & V \\ V^T & 0 \end{bmatrix}Q_s^T  \right| 
U \in \Sy^{r_s}, V \in \R^{r_s \times (|\clique_s|-r_s)}  
\right\} \label{tangentspaceXs}
\eeq
and the space orthogonal to ${\mathbb T}_{s,X}$ is
\beq
{\mathbb T}_{s,X}^{\perp} = \left\{ \left. Q_s \begin{bmatrix} 0 & 0 \\ 0 & W \end{bmatrix}Q_s^T  \right| 
W \in \Sy^{|\clique_s|-r_s}  
\right\} \label{orthtangentspaceXs}
\eeq
where $X_s =$ $Q_s\diag(\lambda_1,\ldots,\lambda_{r_s},
0,\ldots,0)$ $(Q_s)^T$ and $r_s = \rank(X_s)$. The tangent space for the  
conversion SDP~\eqref{sdpconv} is denoted by ${\mathbb T}$ and the space orthogonal to the tangent space  is 
denoted by ${\mathbb T}^{\perp}$ are,
\[ {\mathbb T} = \times_{s=1}^\ell {\mathbb T}_{s,X} \text{ and }
{\mathbb T}^{\perp} = \times_{s=1}^{\ell} {\mathbb T}^{\perp}_{s,X}.
\]
The null space of equality constraints for the conversion SDP~\eqref{sdpconv} is,
\beq
{\mathbb N} = \left\{ \times_{s=1}^{\ell}Y_s \in \Sy^{|\clique_s|} \left| \begin{array}{l}
\sum\limits_{s=1}^{\ell}A_{s,p} \bullet Y_s = 0 \;\forall\; p = 1,\ldots,m \\
E_{s,ij}\bullet Y_s = E_{t,ij}\bullet Y_t \\
\forall\; i\leq j, i,j \in \clique_{st}, (s,t) \in \cledgeset 
\end{array} \right.
\right\}
\label{nullspaceXs}
\eeq
The nullspace for conversion SDP~\eqref{sdpconv} couples all the matrices corresponding to cliques.  The space 
orthogonal to ${\mathbb N}$ denoted by ${\mathbb N}^{\perp}$ is a 
lot more convenient to work with since it can be written as product of spaces.  
The range space of the constraint matrices corresponding to a particular clique $\clique_s$ is given by,
\beq
{\mathbb N}^{\perp}_s = \left\{ \left. \begin{aligned}
&\sum\limits_{p=1}^m\alpha_{p} A_{s,p} \\
&+ \sum\limits_{t:(s,t) \in \cledgeset}\sum\limits_{i \leq j \in \clique_{st}} \beta_{st,ij} E_{s,ij}  \\
&- \sum\limits_{t:(t,s) \in \cledgeset}\sum\limits_{i \leq j \in \clique_{ts}} \beta_{ts,ij} E_{s,ij} \end{aligned} \right|
\begin{aligned}
&\text{for some } \alpha_p \\
&p=1,\ldots,m, \\
&\beta_{st,ij} \, (s,t) \in\cledgeset, \\
&i \leq j \in \clique_{st}  \\
&\text{not all } 0
\end{aligned}
\right\}
\eeq
The range space of the conversion SDP is,
\beq
{\mathbb N}^{\perp} = \left\{ \times_{s=1}^{\ell} {\mathbb N}^{\perp}_s \left| 
\begin{aligned}
&\text{for some } ( \{\alpha_p\}_{p=1}^m,  \\
&\{\beta_{st,ij}\}_{ (s,t) \in\cledgeset, i \leq j \in \clique_{st}} ) \neq 0
\end{aligned} \right.
\right\}.
\eeq
Analogous to Definiton~\ref{def:nondegen}, the conditions for primal non-degeneracy of conversion SDP 
is stated below. 

\begin{defn}\label{def:nondegenconvsdp} Suppose $\{X_s\}_{s=1}^{\ell}$ is primal feasible for \eqref{sdpconv} with $\rank(X_s) = r_s$, then $\{X_s\}$ is primal nondegenerate if the following equivalent conditions hold:
\begin{subequations}\label{convsdpnondegen}
\begin{align}
 {\mathbb T} + {\mathbb N} &= \Sy^{\bar{n}} 	\label{convsdpnondegen-fp} \\
 {\mathbb T}^{\perp} \cap {\mathbb N}^{\perp} &= \{\emptyset\} \label{convsdpnondegen-orth} 
\end{align}
\end{subequations}
where $\bar{n} = \sum_{s=1}^{\ell} \half |\clique_s|(|\clique_s|+1)$.  
\end{defn}

\subsection{Primal Degeneracy of Conversion SDP}
\label{sec:sdpconvdegen}

\begin{assumption}\label{ass:smallrankofX}
The SDP~\eqref{sdp} has an optimal solution $X^*$ with 
$\rank(X^*) < |\clique_{st}|$ for some $(s,t) \in \cledgeset$.
\end{assumption}

In the following we denote by $X^{*}_s$ the optimal solution to the conversion 
SDP~\eqref{sdpconv}.  The following result is immediate. 

\begin{lemma}\label{lem:bndrankofXs} 
$\rank(X^{*}_s) \leq \rank(X^*) \;\forall\; s = 1,\ldots,\ell$.
\end{lemma}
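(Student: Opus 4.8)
The plan is to exploit the fact that each clique matrix $X^{*}_s$ in the conversion SDP~\eqref{sdpconv} is precisely the principal submatrix $X^*_{\clique_s\clique_s}$ of the original optimal solution $X^*$, and then to invoke the elementary fact that a principal submatrix of a positive semidefinite matrix cannot have larger rank than the matrix itself. First I would pin down the correspondence between optimizers. By the equivalence of~\eqref{sdp} and~\eqref{sdpconv} (exactly the analogue of the LP statement $x^*_s = x^*_{\clique_s}$), the optimal clique matrices are the principal blocks of a common optimal completion, $X^{*}_s = X^*_{\clique_s\clique_s}$. Concretely, given an optimal $X^*$ of~\eqref{sdp}, the restrictions $X^*_{\clique_s\clique_s}$ satisfy the objective, the linear constraints $\sum_s A_{s,p}\bullet X_s = b_p$ (by the defining split of $A_{s,p}$, where the nonzeros of $A_{s,p}$ occur only at entries $(i,j)$ with $s = \min\{t\,|\,(i,j)\in\clique_t\}$), the overlap constraints $E_{s,ij}\bullet X_s = E_{t,ij}\bullet X_t$ (both sides equal $\subblk{X^*}{}{ij}$), and $X_s\succeq 0$ (principal submatrices of a PSD matrix are PSD). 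Conversely, Lemma~\ref{thm:cliquethm} guarantees that any feasible $\{X_s\}$ completes to a feasible $X$ of~\eqref{sdp} with matching principal blocks, so the two problems share the same optimal value and their optimizers are related by restriction and completion.

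Second, writing $r = \rank(X^*)$, I would use a rank-$r$ factorization $X^* = V^TV$ with $V \in \R^{r\times n}$, obtained from the eigendecomposition $X^* = Q\diag(\lambda_1,\ldots,\lambda_r,0,\ldots,0)Q^T$ by taking $V$ to be the first $r$ eigenvectors scaled by $\sqrt{\lambda_i}$. Then the principal block factors as $X^*_{\clique_s\clique_s} = V_{\clique_s}^TV_{\clique_s}$, where $V_{\clique_s} \in \R^{r\times|\clique_s|}$ selects the columns of $V$ indexed by $\clique_s$. Since deleting columns cannot increase rank, $\rank(V_{\clique_s}) \leq \rank(V) = r$, and therefore $\rank(X^{*}_s) = \rank(V_{\clique_s}^TV_{\clique_s}) = \rank(V_{\clique_s}) \leq r = \rank(X^*)$, which is exactly the claimed bound.

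I do not expect a serious obstacle, which is consistent with the statement being flagged as \emph{immediate}: the content reduces to monotonicity of rank under passing to a principal submatrix of a PSD matrix, routed through the optimizer correspondence. The only point warranting a line of care is the first step, namely establishing that $X^{*}_s$ genuinely equals the principal submatrix $X^*_{\clique_s\clique_s}$ rather than merely sharing a rank bound with it; this rests on the equivalence of~\eqref{sdp} and~\eqref{sdpconv} together with Lemma~\ref{thm:cliquethm}. Once that identification is in place, the rank inequality is immediate.
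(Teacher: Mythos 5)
Your proposal is correct and follows essentially the same route as the paper: identify $X^{*}_s$ with the principal submatrix $X^*_{\clique_s\clique_s}$ (which the paper simply takes as the definition of $X^{*}_s$) and invoke the fact that a principal submatrix cannot have larger rank than the full matrix. Your Gram-factorization detour $X^* = V^TV$ is a valid but unnecessary embellishment, since rank monotonicity under deletion of rows and columns holds for arbitrary matrices, not just positive semidefinite ones.
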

\begin{proof}
By definition, $X^{*}_s = X^*_{\clique_s\clique_s}$ is a principal submatrix of $X^*$.  The claim 
follows by noting that the rank of any principal sub-matrix cannot exceed that of the 
original matrix.
\end{proof}

\noindent The following result characterizes the eigenvectors of 
the matrices $X^{*}_s, X^{*}_t$ for cliques $s,t$  satisfying Assumption~\ref{ass:smallrankofX}. 
Without loss of generality and for ease of presentation, we assume that the shared nodes of 
$\clique_s, \clique_t$ are ordered as,
\beq
\sigma_s(i) = \sigma_t(i) \text{ and } 
1 \leq \sigma_{s}(i) \leq |\clique_{st}| \;\forall\; i \in \clique_{st}. \label{stpermute}
\eeq

\begin{lemma}\label{lem:eigenvectorXst}
Suppose Assumption~\ref{ass:smallrankofX} holds for cliques $s,t$.  Then, there exists 
$u \in \R^{|\clique_{st}|}$ such that $v_s = [u^T \; 0^T_{|\clique_s\setminus\clique_{st}|}]^T$ is 
a $0$-eigenvector of $X^{*}_s$ and $v_t = [u^T \; 0^T_{|\clique_t\setminus\clique_{st}|}]^T$ is 
a $0$-eigenvector of $X^{*}_t$.
\end{lemma}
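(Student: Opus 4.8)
The plan is to exhibit a single kernel vector $u$ coming from the shared principal block and then propagate the zero-eigenvector property to each clique using positive semidefiniteness. First I would focus on the submatrix $X^*_{\clique_{st}\clique_{st}}$, which is a principal submatrix of $X^*$ and therefore lies in $\Sy^{|\clique_{st}|}_+$. As in the proof of Lemma~\ref{lem:bndrankofXs} (the rank of a principal submatrix cannot exceed that of the parent matrix), we have $\rank(X^*_{\clique_{st}\clique_{st}}) \leq \rank(X^*) < |\clique_{st}|$ under Assumption~\ref{ass:smallrankofX}. Hence $X^*_{\clique_{st}\clique_{st}}$ is rank-deficient and admits a nonzero $u \in \R^{|\clique_{st}|}$ with $X^*_{\clique_{st}\clique_{st}}\, u = 0$. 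This single $u$ is the candidate that must work simultaneously for both cliques.

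Next I would exploit the ordering convention~\eqref{stpermute}, which places $X^*_{\clique_{st}\clique_{st}}$ as the leading $|\clique_{st}| \times |\clique_{st}|$ principal block of both $X^*_s$ and $X^*_t$. Writing $X^*_s$ in block form with this shared block in the top-left corner, the vector $v_s = [u^T \; 0^T_{|\clique_s\setminus\clique_{st}|}]^T$ satisfies $v_s^T X^*_s v_s = u^T X^*_{\clique_{st}\clique_{st}}\, u = 0$. The analogous computation gives $v_t^T X^*_t v_t = u^T X^*_{\clique_{st}\clique_{st}}\, u = 0$, using the \emph{same} $u$ precisely because both cliques share the identical block $X^*_{\clique_{st}\clique_{st}}$, each being a principal submatrix of the common solution $X^*$.

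The final step invokes the standard fact that for a positive semidefinite matrix a vanishing quadratic form forces the vector into the kernel. Since $X^*_s \succeq 0$ (inherited as a principal submatrix of $X^* \succeq 0$) and $v_s^T X^*_s v_s = 0$, factoring $X^*_s = R^T R$ gives $\|R v_s\|^2 = 0$, whence $X^*_s v_s = R^T R v_s = 0$, so $v_s$ is a $0$-eigenvector of $X^*_s$. The identical argument yields $X^*_t v_t = 0$.

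I do not anticipate a serious obstacle; the result is essentially a consequence of the inheritance of positive semidefiniteness by principal submatrices. The only point deserving care is the observation that makes the lemma nontrivial, namely that a \emph{single} kernel vector $u$ of the shared block lifts to a zero-eigenvector of both $X^*_s$ and $X^*_t$ — this is what ties the two cliques together and drives the degeneracy argument in the sequel. In particular, the vanishing of the off-diagonal coupling (that $R v_s = 0$ forces the mixed block acting on $u$ to vanish as well) is delivered automatically by the PSD quadratic-form argument rather than by any direct computation.
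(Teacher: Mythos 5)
Your proof is correct and follows essentially the same route as the paper's: extract a kernel vector $u$ of the rank-deficient shared block $X^*_{\clique_{st}\clique_{st}}$, pad it with zeros to form $v_s$, $v_t$, and conclude from $v_s^T X^*_s v_s = 0$ together with $X^*_s \succeq 0$ that $v_s$ lies in the kernel. The only cosmetic difference is the last step's justification --- you use the Gram factorization $X^*_s = R^T R$ where the paper uses a Rayleigh-quotient argument --- and your version is, if anything, the cleaner of the two.
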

\begin{proof}
From Lemma~\ref{lem:bndrankofXs}, we have that $\rank(X^{*}_s), \rank(X^{*}_t) \leq \rank(X^*) < 
|\clique_{st}|$ where the second inequality follows from Assumption~\ref{ass:smallrankofX}. 
Applying Lemma~\ref{lem:bndrankofXs} $X_s^*, X_t^*$, it is also true that the submatrix of 
$X^{*}_s, X^{*}_t$ corresponding to $\clique_{st}$ must have rank smaller than $|\clique_{st}|$.  
Hence, there exists a vector $u \in \R^{|\clique_{st}|}$ that lies in the nullspace of the principal submatrix 
$X_{\clique_{st}\clique_{st}}$.   Defining the vector 
$v_s = [ u^T \; 0^T_{|\clique_s\setminus\clique_{st}|}]^T$ and  
taking the right and left products with $v_s$ of the matrix $X^{*}_s$,
\[\bal
v_s^T X_s^{*}v_s = u^T(X^*_{s})_{\clique_{st}\clique_{st}}u = 0 \implies \frac{v_s^TX^{*}_sv_s}{v_s^Tv_s} = 0  \\
\implies v_s \text{ is in the span of  } 0-\text{eigenvectors of } X^{*}_s \\
\implies v_s \text{ is a } 0-\text{eigenvector of } X^{*}_s
\eal\]
The claim on $X^{*}_t$ can be proved similarly and this completes the proof.
\end{proof}

We can now state the main result on the primal degeneracy of the conversion 
SDP~\eqref{sdpconv}.

\begin{theorem}\label{thm:sdpconvisdegenerate}
Suppose Assumption~\ref{ass:smallrankofX} holds. Then, the solution $X^{*}_s$ 
of the conversion SDP~\eqref{sdpconv} is primal degenerate.
\end{theorem}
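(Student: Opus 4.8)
The plan is to establish primal degeneracy by showing that the non-degeneracy condition \eqref{convsdpnondegen-orth} of Definition~\ref{def:nondegenconvsdp} fails, i.e.\ to exhibit a \emph{nonzero} element of ${\mathbb T}^{\perp}\cap{\mathbb N}^{\perp}$. Fix the edge $(s,t)\in\cledgeset$ guaranteed by Assumption~\ref{ass:smallrankofX} and invoke Lemma~\ref{lem:eigenvectorXst} to obtain a vector $u\in\R^{|\clique_{st}|}$ together with the shared $0$-eigenvectors $v_s=[u^T\;0^T]^T$ of $X^{*}_s$ and $v_t=[u^T\;0^T]^T$ of $X^{*}_t$, using the ordering convention \eqref{stpermute}.

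First I would build the candidate tuple $(M_1,\ldots,M_\ell)$ by setting $M_s=v_sv_s^T$, $M_t=-v_tv_t^T$, and $M_{s'}=0$ for every other clique $s'$. Membership in ${\mathbb T}^{\perp}$ is then immediate from \eqref{orthtangentspaceXs}: since $v_s$ is a $0$-eigenvector of $X^{*}_s$, it lies in the span of the trailing columns of $Q_s$, so $v_sv_s^T$ has exactly the block form of \eqref{orthtangentspaceXs} and belongs to ${\mathbb T}^{\perp}_{s,X}$; likewise $-v_tv_t^T\in{\mathbb T}^{\perp}_{t,X}$, and the zero blocks lie trivially in the remaining orthogonal tangent spaces.

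The second step is to certify that the same tuple lies in ${\mathbb N}^{\perp}$ by producing the defining coefficients. I would take $\alpha_p=0$ for all $p$, set $\beta_{s'r,ij}=0$ for every edge $(s',r)\neq(s,t)$, and for the edge $(s,t)$ choose $\beta_{st,ii}=u_{\sigma_s(i)}^2$ and $\beta_{st,ij}=2\,u_{\sigma_s(i)}u_{\sigma_s(j)}$ for $i<j$. Because $E_{s,ii}=e_{\sigma_s(i)}e_{\sigma_s(i)}^T$ and $E_{s,ij}=\half(e_{\sigma_s(i)}e_{\sigma_s(j)}^T+e_{\sigma_s(j)}e_{\sigma_s(i)}^T)$, these coefficients reconstruct $\sum_{i\leq j\in\clique_{st}}\beta_{st,ij}E_{s,ij}=v_sv_s^T$ in the clique-$s$ coordinates, matching $M_s$; the sign flip on the $(t,s)$ orientation in the defining expression for ${\mathbb N}^{\perp}_s$ reproduces $-v_tv_t^T=M_t$ in the clique-$t$ coordinates, and all other components vanish. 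Since $u\neq0$ forces the $\beta_{st,ij}$ to be not-all-zero, the coefficient tuple is nonzero and $M_s=v_sv_s^T\neq0$, so $(M_1,\ldots,M_\ell)$ is a genuine nonzero element of ${\mathbb T}^{\perp}\cap{\mathbb N}^{\perp}$, which finishes the argument.

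The main obstacle I anticipate is bookkeeping rather than conceptual. I must verify that $v_sv_s^T$, which is supported on the $\clique_{st}\times\clique_{st}$ block, is \emph{exactly} realizable from the coupling generators $\{E_{s,ij}\}_{i\leq j\in\clique_{st}}$ --- which it is, precisely because $u$ is supported on $\clique_{st}$ --- and I must keep the orientation convention of $\cledgeset$ straight so that the edge $(s,t)$ enters the clique-$s$ expression with a $+$ sign and the clique-$t$ expression with a $-$ sign. Getting the factors of $\half$ and the diagonal-versus-off-diagonal cases right in the expansion of the $E_{s,ij}$ is the one place where a careless computation could spoil the exact identity $M_s=v_sv_s^T$.
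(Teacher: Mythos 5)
Your proposal is correct and follows essentially the same route as the paper's own proof: invoke Lemma~\ref{lem:eigenvectorXst} to get the shared $0$-eigenvectors $v_s, v_t$, set all $\alpha_p$ and all couplings off the distinguished edge to zero, and choose the $\beta_{st,ij}$ on that edge so that $\sum_{i\leq j}\beta_{st,ij}E_{s,ij}=v_sv_s^T\in{\mathbb T}^{\perp}_{s,X^*}$ and $-\sum_{i\leq j}\beta_{st,ij}E_{t,ij}=-v_tv_t^T\in{\mathbb T}^{\perp}_{t,X^*}$, violating \eqref{convsdpnondegen-orth}. In fact your explicit coefficients (with the factor $2$ on the off-diagonal terms) are more careful than the paper's, whose formula $\hat{\beta}_{st,ij}=E_{s,ij}\bullet(v_sv_s^T)$ misses that the off-diagonal $E_{s,ij}$ are not unit-norm, a harmless slip that your bookkeeping avoids.
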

\begin{proof}
Suppose there exists scalars $\alpha_p, \beta_{st,ij} \neq 0$ such that 
\beq
\begin{aligned}
\sum\limits_{p=1}^m\alpha_{p} A_{s,p} + \sum\limits_{(s,t) \in \cledgeset}\sum\limits_{i \leq j \in 
\clique_{st}} \beta_{st,ij} E_{s,ij} \in {\mathbb T}_{s,X^*}^{\perp} \\
\sum\limits_{p=1}^m\alpha_p A_{t,p} - \sum\limits_{(s,t) \in \cledgeset}\sum\limits_{i \leq j \in 
\clique_{st}} \beta_{st,ij} E_{t,ij} \in {\mathbb T}_{t,X^*}^{\perp}
\end{aligned}\label{convsdpdegen-licq}
\eeq
holds.  Then, we have that $X^{*}_s$ is primal degenerate since the condition in~\eqref{convsdpnondegen-orth} 
does not hold.  In the following we will denote by $\hat{s}, \hat{t}$ \emph{a pair} of cliques satisfying 
Assumption~\ref{ass:smallrankofX}.  We will show in the following that:
 \beq
\alpha_p = 0 , \beta_{st,ij} = \left\{\begin{aligned}
\hat{\beta}_{\hat{s}\hat{t},ij} &\; \text{for } (s,t) = (\hat{s},\hat{t}) \\
0 &\; \text{otherwise} 
\end{aligned} \right. \label{degenchoice}
\eeq
satisfies~\eqref{convsdpdegen-licq}.  The choice in~\eqref{degenchoice} results in the left hand side of~\eqref{convsdpdegen-licq} evaluating to $0$ for all $(s,t) \neq (\hat{s},\hat{t})$.  Thus,~\eqref{convsdpdegen-licq} 
holds trivially for all $(s,t) \neq (\hat{s},\hat{t})$ since $0 \in {\mathbb T}^{\perp}_{s,X^*}, {\mathbb T}^{\perp}_{t,X^*}$.   
In the rest of the proof we will consider only the cliques $(s,t) = (\hat{s}, \hat{t})$.  

By Lemma~\ref{lem:eigenvectorXst} and~\eqref{orthtangentspaceXs},
$v_sv_s^T \in {\mathbb T}_{s,X^*}^{\perp} \text{ and } v_tv_t^T \in {\mathbb T}_{t,X^*}^{\perp}$.
Define $\hat{\beta}_{st,ij} = E_{s,ij}\bullet (v_sv_s^T) = v_s^TE_{s,ij}v_s$.  By definition
of $\hat{\beta}_{st,ij}$, 
\beq
v_sv_s^T = \sum_{i \leq j \in \clique_{st}} \hat{\beta}_{st,ij} E_{s,ij}. \label{defvsvsT}
\eeq  
By definition of $v_s, v_t$ in Lemma~\ref{lem:eigenvectorXst} and~\eqref{stpermute}, we also have that 
$v_tv_t^T =  \sum_{i \leq j \in \clique_{st}} \hat{\beta}_{st,ij} E_{t,ij}$.  Thus, the choice in~\eqref{degenchoice} 
satisfies~\eqref{convsdpdegen-licq}.  This completes the proof.
\end{proof}

\subsection{Dual Non-uniqueness in Conversion SDP}
\label{sec:sdpconvmultidual}

The solution $X^{*}_s$ and multipliers $\zeta^{*}_{s,p}, \xi^{*}_{st,ij}, S^{*}_s$ 
satisfy the first order optimality conditions for the conversion SDP~\eqref{sdpconv} for 
all $s = 1,\ldots,\ell$,
\beq\bal
& A_{s,0} + \sum\limits_{p=1}^m \zeta^{*}_{s,p}A_{s,p} + \sum\limits_{t : (s,t) \in \cledgeset} 
\sum\limits_{i \leq j \in \clique_{st}} \xi^{*}_{st,ij} E_{s,ij} \\
& - \sum\limits_{t: (t,s) \in \cledgeset} 
\sum\limits_{i \leq j \in \clique_{ts}} \xi^*_{ts,ij} E_{s,ij} - S^{*}_s = 0 \\
& \sum\limits_{s=1}^\ell A_{s,p}\bullet X^{*}_s = b_p  \\
& E_{s,ij}\bullet X^{*}_s = E_{t,ij}\bullet X^{*}_t  \\
& X^{*}_s,S^{*}_s \succeq 0, X^{*}_sS^{*}_s = 0. 
\eal\label{firstoptconvsdp}\eeq

\begin{theorem}\label{thm:nonuniquemultiplier}
Suppose Assumption~\ref{ass:smallrankofX} holds and $v_s^TS^*_sv_s > 0$ or  
$v_t^TS^*_tv_t > 0$.  Then, the optimal multipliers 
for the conversion SDP~\eqref{sdpconv} are not unique.
\end{theorem}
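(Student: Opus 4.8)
The plan is to prove non-uniqueness constructively, by exhibiting a nontrivial one-parameter family of dual variables $(\{\zeta^*_{s,p}\},\{\xi_{st,ij}(\gamma)\},\{S_s(\gamma)\})$, parameterized by a scalar $\gamma$ near $0$, all of which satisfy the optimality system~\eqref{firstoptconvsdp} for the same primal optimum $\{X^*_s\}$. Since distinct values of $\gamma$ give distinct multipliers, this immediately contradicts uniqueness. The construction reuses the objects produced in the proof of Theorem~\ref{thm:sdpconvisdegenerate}: the pair $(\hat s,\hat t)$ from Assumption~\ref{ass:smallrankofX}, the $0$-eigenvectors $v_s,v_t$ of $X^*_{\hat s},X^*_{\hat t}$ from Lemma~\ref{lem:eigenvectorXst}, and the scalars $\hat\beta_{\hat s\hat t,ij}$ satisfying $v_sv_s^T=\sum_{i\le j\in\clique_{\hat s\hat t}}\hat\beta_{\hat s\hat t,ij}E_{\hat s,ij}$ and $v_tv_t^T=\sum_{i\le j\in\clique_{\hat s\hat t}}\hat\beta_{\hat s\hat t,ij}E_{\hat t,ij}$ from~\eqref{defvsvsT}.

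\textbf{The family and the easy conditions.} First I would define the family: keep every $\zeta^*_{s,p}$ fixed and every $\xi^*_{st,ij}$ fixed except on the edge $(\hat s,\hat t)$, where I set $\xi_{\hat s\hat t,ij}(\gamma)=\xi^*_{\hat s\hat t,ij}+\gamma\hat\beta_{\hat s\hat t,ij}$; and keep every slack matrix fixed except $S_{\hat s}(\gamma)=S^*_{\hat s}+\gamma\,v_sv_s^T$ and $S_{\hat t}(\gamma)=S^*_{\hat t}-\gamma\,v_tv_t^T$. Then I would verify stationarity in~\eqref{firstoptconvsdp}: only the equations for cliques $\hat s$ and $\hat t$ are touched, and by the identities from~\eqref{defvsvsT} the term $\gamma\sum_{i\le j}\hat\beta_{\hat s\hat t,ij}E_{\hat s,ij}=\gamma v_sv_s^T$ introduced through $\xi$ in clique $\hat s$ is cancelled exactly by $-S_{\hat s}(\gamma)$; the clique-$\hat t$ equation cancels identically because $\xi_{\hat s\hat t,ij}$ enters there with the opposite sign and $S_{\hat t}$ is moved by $-\gamma v_tv_t^T$. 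Thus stationarity holds for every $\gamma$. Primal feasibility is untouched, and complementarity is preserved because $X^*_{\hat s}v_s=0$ and $X^*_{\hat t}v_t=0$ (Lemma~\ref{lem:eigenvectorXst}) give $X^*_{\hat s}S_{\hat s}(\gamma)=X^*_{\hat s}S^*_{\hat s}=0$ and likewise for $\hat t$.

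\textbf{The main obstacle: dual feasibility.} The hard part will be the cone constraints $S_{\hat s}(\gamma)\succeq 0$ and $S_{\hat t}(\gamma)\succeq 0$, since the construction necessarily adds a rank-one positive semidefinite term to one slack matrix and subtracts it from the other. The additive side is trivially in $\Sy^{|\clique|}_+$, so everything reduces to keeping the subtractive side in the cone, and this is exactly where the hypothesis enters. Assume without loss of generality that $v_s^TS^*_sv_s>0$ (the case $v_t^TS^*_tv_t>0$ is symmetric, with the sign of $\gamma$ reversed). Choosing $\gamma<0$ makes $S_{\hat t}(\gamma)=S^*_{\hat t}+|\gamma|\,v_tv_t^T\succeq 0$ automatic, so I only have to establish $S^*_{\hat s}-|\gamma|\,v_sv_s^T\succeq 0$ for all sufficiently small $|\gamma|$. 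The clean sufficient condition is $v_s\in\range(S^*_{\hat s})$: restricted to $\range(S^*_{\hat s})$ the matrix $S^*_{\hat s}$ is positive definite, so a small rank-one subtraction along a vector of that range remains positive semidefinite. This yields a nonempty interval of admissible $\gamma$, hence infinitely many distinct optimal multipliers, proving the claim.

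\textbf{Where to be careful.} I expect the passage $v_s^TS^*_sv_s>0 \Rightarrow v_s\in\range(S^*_{\hat s})$ to be the delicate point of the write-up, since the quadratic form being positive only precludes $v_s\in\ker(S^*_{\hat s})$ entirely, not a partial $\ker(S^*_{\hat s})$-component. The bridge is complementarity together with strict complementarity of the conversion SDP: $X^*_{\hat s}S^*_{\hat s}=0$ forces $\range(S^*_{\hat s})\subseteq\ker(X^*_{\hat s})$, while $v_s\in\ker(X^*_{\hat s})$ is orthogonal to $\range(X^*_{\hat s})$; when $\ker(S^*_{\hat s})=\range(X^*_{\hat s})$ these combine to give $v_s\in\range(S^*_{\hat s})$, and positivity then holds automatically. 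I would also note that the hypothesis is genuinely necessary for this family: if both $v_s^TS^*_sv_s=0$ and $v_t^TS^*_tv_t=0$, then every admissible step along $\pm v_sv_s^T$ or $\pm v_tv_t^T$ would immediately leave $\Sy_+$, collapsing the family to the trivial multiplier and defeating the construction.
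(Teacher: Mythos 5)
Your construction is exactly the paper's: fix $\zeta^*$, move $\xi^*_{\hat{s}\hat{t},ij}$ along $\hat\beta_{\hat{s}\hat{t},ij}$, and move the two slacks along $\pm v_sv_s^T$, $\mp v_tv_t^T$; stationarity cancels via \eqref{defvsvsT}, primal feasibility is untouched, and complementarity is preserved because $X^*_{\hat{s}}v_s=0$, $X^*_{\hat{t}}v_t=0$. The two arguments part ways only at the step you yourself flagged as delicate: positive semidefiniteness of the slack that receives the rank-one \emph{subtraction}. There your bridge is strict complementarity, $\ker(S^*_{\hat{s}})=\range(X^*_{\hat{s}})$, which is \emph{not} a hypothesis of the theorem, and nothing in Assumption~\ref{ass:smallrankofX} or in \eqref{firstoptconvsdp} supplies it. So as written your argument proves the statement only under an additional assumption, and the gap cannot be discharged from the stated hypothesis: $v_s^TS^*_sv_s>0$ does not imply $v_s\in\range(S^*_{\hat{s}})$, and without the range condition $S^*_{\hat{s}}-\delta v_sv_s^T$ fails to be positive semidefinite for \emph{every} $\delta>0$. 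Concretely, $X^*_s=\diag(0,0,1)$, $S^*_s=\diag(1,0,0)$, $v_s=\tfrac{1}{\sqrt{2}}(1,1,0)^T$ satisfies $X^*_s,S^*_s\succeq 0$, $X^*_sS^*_s=0$, $X^*_sv_s=0$, $v_s^TS^*_sv_s>0$, yet $v_s\notin\range(S^*_s)$ and $(S^*_s-\delta v_sv_s^T)_{22}=-\delta/2<0$ for all $\delta>0$.

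It is worth saying that the paper's own proof stumbles at the same spot, only less visibly: it closes this step by asserting that $X^*_sv_s=0$ together with $X^*_sS^*_s=0$ makes $v_s$ an \emph{eigenvector} of $S^*_s$ (whence $S^*_s-\delta v_sv_s^T\succeq 0$ for $0\le\delta\le\gamma$). That inference is invalid: $X^*_sS^*_s=0$ only makes $\ker(X^*_s)$ an invariant subspace of $S^*_s$, and under Assumption~\ref{ass:smallrankofX} one has $\dim\ker(X^*_{\hat{s}})\ge |\clique_{\hat{s}}|-|\clique_{\hat{s}\hat{t}}|+1\ge 2$, so a particular vector of $\ker(X^*_{\hat{s}})$ need not be an eigenvector; the same $3\times 3$ example above refutes it. So your instinct correctly located the soft spot, and your sufficient condition $v_s\in\range(S^*_{\hat{s}})$ is the right invariant (it is exactly what is needed, and it subsumes the paper's intended eigenvector situation as well as your strict-complementarity case). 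The honest repair—for you and for the paper—is to strengthen the theorem's hypothesis to $v_s\in\range(S^*_{\hat{s}})$ or $v_t\in\range(S^*_{\hat{t}})$, after which your family of multipliers completes the proof; as proofs of the theorem \emph{exactly as stated}, both your write-up and the paper's contain the same unclosed gap.
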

\begin{proof}
Let $\zeta^*_{s,p}, \xi^*_{st,ij}, S^*_s$ satisfy the first order optimality 
conditions~\eqref{firstoptconvsdp} for the conversion SDP~\eqref{sdpconv}. 
In the following we show by construction the existence of other multipliers satisfying the 
conditions in~\eqref{firstoptconvsdp} for the cliques $(\hat{s},\hat{t})$ as defined in 
Theorem~\ref{thm:sdpconvisdegenerate}.  In the rest of the proof $(s,t) = (\hat{s},\hat{t})$. 
Suppose, $v_s^TS^*_sv_s = \gamma > 0$.  Since, $v_s$ is a 0-eigenvector of $X^*_s$ 
(Lemma~\ref{lem:eigenvectorXst}) and 
$X^*_sS^*_s = 0$~\eqref{firstoptconvsdp} we have that $v_s$ is also an eigenvector 
of $S^*_s$.  Thus, for all $0 \leq \delta \leq \gamma$, 
\beq\bal
X^*_s(S_s^* - \delta v_sv_s^T) = 0,  S_s^* - \delta v_sv_s^T \succeq 0 \\
X^*_t(S_t^* + \delta v_tv_t^T) = 0,  S_t^* + \delta v_sv_s^T \succeq 0  
\eal\label{complXs}\eeq
Following the proof of Theorem~\ref{thm:sdpconvisdegenerate} we have that there exist 
$\hat{\beta}_{st,ij}$ such that~\eqref{defvsvsT} holds.  Hence, 
\[\bal
& \sum\limits_{i \leq j \in \clique_{st}} (\xi^*_{st,ij} - \delta\hat{\beta}_{st,ij}) E_{s,ij} - (S_s^* 
- \delta v_sv_s^T) \\
= & \sum\limits_{i \leq j \in \clique_{st}} \xi^*_{st,ij}E_{s,ij} - S_s^*. 
\eal\]
Further, by Lemma~\ref{lem:eigenvectorXst} we also have that,
\[\bal
& - \sum\limits_{i \leq j \in \clique_{st}} (\xi^*_{st,ij} - \delta\hat{\beta}_{st,ij}) E_{t,ij} - (S_t^* 
+ \delta v_tv_t^T) \\
= & - \sum\limits_{i \leq j \in \clique_{st}} \xi^*_{st,ij} E_{t,ij} - S_t^* \\
& + \left(\sum\limits_{i \leq j \in \clique_{st}} \delta\hat{\beta}_{st,ij} E_{t,ij} -  \delta v_tv_t^T 
\right)  \\
= & - \sum\limits_{i \leq j \in \clique_{st}} \xi^*_{st,ij} E_{t,ij} - S_t^* + \delta ( v_tv_t^T - v_tv_t^T) \\
= & - \sum\limits_{i \leq j \in \clique_{st}} \xi^*_{st,ij} E_{t,ij} - S_t^*.
\eal\]
Thus, for any $0 < \delta \leq \gamma$ replacing $\xi^*_{st,ij}, S^*_s,S^*_t$ with 
\[
\xi^*_{st,ij} + \delta\hat{\beta}_{st,ij}, S^*_s - \delta v_sv_s^T, S_t^*+\delta v_tv_t^T
\]
will also result in satisfaction of the first order optimality conditions 
in~\eqref{firstoptconvsdp}.  Hence, the 
multipliers are not unique when $v_s^TS_s^*v_s > 0$.  The proof follows in an identical 
fashion for $v_t^TS_t^*v_t > 0$.  This completes the proof.
\end{proof}


\section{Numerical Experiments}\label{sec:numericalexpts}

We demonstrate the results of the previous section through numerical experiments on a
simple SDP.  Consider the SDP with data
\beq
A_0 = \begin{bmatrix} 
1 & 1 & 1 & 0 \\
1 & 1 & 0 & 1 \\
1 & 0 & 1 & 1 \\
0 & 1 & 1 & 1 \end{bmatrix}, A_p = e_pe_p^T, b_p = 1 \;\forall\; p = 1,\ldots,4.
\label{example1}\eeq
This form of the SDP has the same structure as the SDP relaxation for MAXCUT 
investigated by Goemans and Williamson~\cite{GoeWil94}.  The eigenvalues and eigenvectors 
of $A_0$ are,
\[
\Lambda_0 = \begin{bmatrix} 
-1 & 0 & 0 & 0 \\
0 & 1 & 0 & 0 \\
0 & 0 & 1 & 0 \\
0 & 0 & 0 & 3 \end{bmatrix},
Q_0 = \begin{bmatrix} 
-\half & 0 & \frac{1}{\sqrt{2}} & -\half \\
\half & -\frac{1}{\sqrt{2}} & 0 & -\half \\
\half & \frac{1}{\sqrt{2}} & 0 & -\half \\
-\half & 0 & -\frac{1}{\sqrt{2}} & -\half 
\end{bmatrix}.
\]
Since $A_0$ has the smallest eigenvalue to be $-1$,  the optimal solution to the SDP 
defined by~\eqref{example1} is $X^* = 4 q_1q_1^T$ where $q_1$ is the first column of 
$Q_0$ (the eigenvector of $A_0$ corresponding to eigenvalue of $-1$).  The factor $4$ 
ensures that the equality constraints are satisfied.  

\subsection{Primal Non-degeneracy of $X^*$}
We show in the following that $X^*$ 
is primal non-degenerate by verifying satisfaction of~\eqref{nondegen-licq}.  
From the definition of ${\mathbb T}_X^{\perp}$ in~\eqref{orthtangentspace} we have,
\[
{\mathbb T}^{\perp}_X = \left\{ \left. Q \begin{bmatrix} 0 & 0 \\  0 & W \end{bmatrix}Q^T  \right| 
W \in \Sy^3 \right\}
\]
Suppose, there exists $(\alpha_1,\ldots,\alpha_4) \neq 0$ such that~\eqref{nondegen-licq} holds,
\beq
		\sum\limits_{p=1}^4 \alpha_p A_p \in {\mathbb T}^{\perp}_X \\
\implies	\sum\limits_{p=1}^4 \alpha_p Q^TA_pQ =  \begin{bmatrix} 0 & 0 \\  0 & W \end{bmatrix} \label{contra12c}
\eeq
for some $W \in \Sy^3$.  In order for~\eqref{contra12c} to hold the first column of the matrix on the left hand side 
of~\eqref{contra12c} must be $0$ for some $\{\alpha_p\} \neq 0$.  We show that such $\alpha_p$ does not 
exist.  The condition that the first column of $\sum_{p=1}^4 \alpha_p Q^TA_pQ$ is $0$ can be written as,
\[\begin{aligned}
0 &= \sum\limits_{p=1}^4 \alpha_p Q^TA_pQ e_1 = \sum\limits_{p=1}^4 \alpha_p (Q^Te_p)(e_p^Tq_1) \\
&= \sum\limits_{p=1}^4 (Q^Te_p)(\subblk{q}{1}{p}\alpha_p) = 
Q^T\begin{bmatrix} \alpha_1 \subblk{q}{1}{1} \\ \vdots \\ \alpha_4 \subblk{q}{1}{4} \end{bmatrix}.
\end{aligned}\]  
Since $Q^T$ is a non-singular matrix the above can only occur if $\subblk{q}{1}{p}\alpha_p = 0$ for all 
$p$.  Since $\subblk{q}{1}{p} \neq 0$ this implies that $\alpha_p = 0$ for all $p = 1,\ldots,4$.   
Thus,~\eqref{contra12c} does not hold for $\alpha_p \neq 0$ which proves the non-degeneracy of $X^*$.

\begin{figure}[h]
\centering
\subfigure[$\graph(\nodeset,\edgeset)$]{
\includegraphics[scale=0.4]{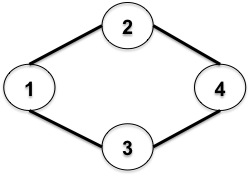}\label{fig:a}}
\subfigure[$\graph(\nodeset,\edgesetF)$]{
\includegraphics[scale=0.4]{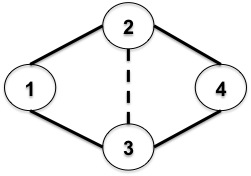}\label{fig:b}}
\subfigure[$\clique_1 = \{2,3,1\}, \clique_2 = \{2,3,4\}$]{
\includegraphics[scale=0.4]{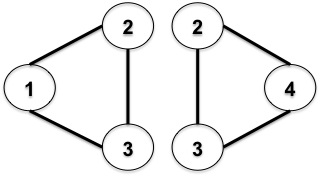}\label{fig:c}}
\caption{\subref{fig:a} Graph of the original SDP. \subref{fig:b} Graph of the chordal completion. 
\subref{fig:c} Maximal clique decomposition of chordal completion. }
\end{figure}

\subsection{Conversion SDP}

For the data in~\eqref{example1}, the graph of the aggregate sparsity pattern is 
depicted in Figure~\ref{fig:a}.  The $\graph(\nodeset,\edgeset)$ is a 4-cycle and not chordal.  
Figure~\ref{fig:b} shows a chordal extension where an edge $(2,3)$ has been introduced. 
The maximal clique decomposition for the chordal graph $\graph(\nodeset,\edgesetF)$ is shown 
in Figure~\ref{fig:c}.  Note that we have ordered the vertices such that~\eqref{stpermute} 
for ease of presentation. The conversion SDP is given by the data,
\[\bal
& \clique_1 = \{2,3,1\}, \clique_2 = \{2,3,4\} \\
& A_{1,0} = \begin{bmatrix}
1 & 0 & 1 \\
0 & 1 & 1 \\
1 & 1 & 1 \end{bmatrix}, A_{2,0} = \begin{bmatrix}
0 & 0 & 1 \\
0 & 0 & 1 \\
1 & 1 & 1 \end{bmatrix} \\
& A_{1,1} = e_3e_3^T, A_{2,1} = 0; A_{1,2} = e_1e_1^T, A_{2,2} = 0 \\
& A_{1,3} = e_2e_2^T, A_{2,3} = 0; A_{1,4} = 0, A_{2,4} = e_3e_3^T \\
& E_{1,22} = E_{2,22} = e_1e_1^T, E_{1,23} = E_{2,23} = \half(e_1e_2^T   + e_2e_1^T) \\
& E_{1,33} = E_{2,33} = e_2e_2^T.
\eal\] 
The solution to the conversion SDP is,
\[
X^*_1 = X^*_2 = \begin{bmatrix} 1 & 1 & -1 \\
1 & 1 & -1 \\
-1 & -1 & 1 \end{bmatrix} 
\]
Clearly, $\rank(X^*_1)$ = $\rank(X^*_2)$ = 1 $< |\clique_{12}|$.  Hence, 
Assumption~\ref{ass:smallrankofX} holds.  The eigenvectors and eigenvalues of $X^*_1$ are,
\[
\Lambda_1 = \begin{bmatrix} 3 & 0 & 0 \\
0 & 0 & 0 \\
0 & 0 & 0 \end{bmatrix}, Q_1 = \begin{bmatrix} 
\frac{1}{\sqrt{3}} & -\frac{1}{\sqrt{6}} & \frac{1}{\sqrt{2}}  \\
\frac{1}{\sqrt{3}} & -\frac{1}{\sqrt{6}} & -\frac{1}{\sqrt{2}}  \\
-\frac{1}{\sqrt{3}} & -\frac{2}{\sqrt{6}} & 0  \end{bmatrix}.
\]

\subsection{Primal Degeneracy}

As shown in Lemma~\ref{lem:eigenvectorXst} we have that $u = [\frac{1}{\sqrt{2}} \; 
\frac{1}{\sqrt{2}}]^T$ is a 0-eigenvector of the submatrix 
which corresponds to the intersection of the cliques, $\clique_{12}$.  As shown in 
Lemma~\ref{lem:eigenvectorXst}, $v_1 = v_2 = [u^T \; 0]^T$ are 
 0-eigenvectors of $X^*_1, X^*_2$ respectively.

From the definition of ${\mathbb T}^{\perp}_{s,X^*}$ it is easy to see that, 
\[\bal
& v_1v_1^T  = \begin{bmatrix} \half & -\half & 0 \\
-\half & \half & 0 \\
0 & 0 & 0 \end{bmatrix} = Q_1 \begin{bmatrix} 0 & 0 & 0 \\
0 & 0 & 0 \\
0 & 0 & 1 \end{bmatrix}Q_1^T \in {\mathbb T}^{\perp}_{1,X^*} \\
& v_1v_1^T = \half E_{1,22} - E_{1,23} + \half E_{1,33}. 
\eal\]
Similarly, it can be shown that 
\[\bal
& -v_2v_2^T  = Q_1 \begin{bmatrix} 0 & 0 & 0 \\
0 & 0 & 0 \\
0 & 0 & -1 \end{bmatrix}Q_1^T \in {\mathbb T}^{\perp}_{1,X^*} \\
& -v_2v_2^T = \half (-E_{2,22}) - (-E_{2,23}) + \half (-E_{2,33}). 
\eal\]
Thus, there exists an element in ${\mathbb T}^{\perp}_{1,X^*}$ and ${\mathbb T}^{\perp}_{2,X^*}$ 
that is in the span of the constraints that equate the 
elements in $\clique_{12}$.  Hence, the conversion SDP is primal degenerate.

\subsection{Non-unique Multipliers}

For the original SDP, the optimal multipliers are,
\[
\zeta^* = \begin{bmatrix} -1 \\ -1 \\ -1 \\ -1 \end{bmatrix}, 
S^* = \begin{bmatrix} 2 & 1 & 1 & 0 \\ 1 & 2 &  0 & 1 \\ 1 & 0 & 2 & 1 \\ 0 & 1 & 1 & 2 
\end{bmatrix}.
\]
For the conversion SDP, multipliers satisfying~\eqref{firstoptconvsdp} are
\[\bal
&\zeta^*_1 = \begin{bmatrix} -1 \\ -1 \\ -1 \\ 0 \end{bmatrix}, \zeta^*_2 = \begin{bmatrix} 
0 \\ 0 \\ 0 \\ -1 \end{bmatrix}, \xi^*_{12,22} = 1, \xi^*_{ 12,23} = 0, \xi^*_{12,33} = 1 
\eal\]
\[\bal
&S^*_1 = \begin{bmatrix} 1 & 0 & 1 \\ 0 & 1 & 1 \\ 1 & 1 & 2 \end{bmatrix}, 
S^*_2 = \begin{bmatrix} 1 & 0 & 1 \\ 0 & 1 & 1 \\ 1 & 1 & 2 \end{bmatrix}.
\eal\]
The eigenvectors of $S_1^*, S_2^*$ are $Q_1$ while the eigenvalues are,
\[
\Lambda_{S1} = \Lambda_{S2} = \begin{bmatrix} 0 & 0 & 0 \\ 0 & 3 & 0 \\ 0 & 0 & 1 \end{bmatrix}.
\]
Thus it is easy to see that $X^*_1S^*_1 = 0$ and they satisfy strict complementarity.  The  
same is also true of $X^*_2$ and $S^*_2$.  
The eigenvalue of $v_1$ is 1 and satisfies the conditions in 
Theorem~\ref{thm:nonuniquemultiplier} and hence, for all $0 \leq\delta \leq 1$,
\[\bal
& S^*_1-\delta v_1v_1^T \succeq 0, X^*_1( S^*_1-\delta v_1v_1^T) = 0  \\
& (\xi^*_{12,22}-\half\delta)E_{12,22} + (\xi^*_{12,23}+\delta)E_{12,23}  \\
& + (\xi^*_{12,33}-\half\delta)E_{12,33} - (S^*_1-\delta v_1v_1^T) \\
=\; & \xi^*_{12,22}E_{12,22} + \xi^*_{12,23} E_{12,23}  + \xi^*_{12,33} E_{12,33} 
- S^*_1
\eal\]
Further, it can also be shown that,
\[\bal
& (\xi^*_{12,22}-\half\delta)(-E_{12,22}) + (\xi^*_{12,23}+\delta)(-E_{12,23})  \\
& + (\xi^*_{12,33}-\half\delta)(-E_{12,33}) - (S^*_2+\delta v_2v_2^T) \\
=\; & -\xi^*_{12,22}E_{12,22} - \xi^*_{12,23} E_{12,23}  - \xi^*_{12,33} E_{12,33} 
- S^*_2 \\
& S^*_2+\delta v_2v_2^T \succeq 0, X_2^*(S^*_2+\delta v_2v_2^T) = 0.  
\eal\]
Thus, we have that the multipliers
\[\bal
& \zeta^*_1,\; \zeta^*_2,\; \xi^*_{12,22}-\half\delta,\; \xi^*_{12,23}+\delta,\; \xi^*_{12,33}-\half\delta \\
& S^*_1+\delta v_1v_1^T,\; S^*_2+\delta v_2v_2^T
\eal\]
also satisfy the first order optimality conditions for conversion SDP.  This shows that there 
are an infinite set of multipliers for the conversion SDP.

\subsection{Ill-conditioning in IPM}

Since the multipliers are not unique, the matrix used in the step 
computation of the IPM for SDP must be singular in the limit.  
Figure~\ref{fig:illcondn} plots the condition number of the 
schur-complement matrix in SDPT3~\cite{sdpt3} against the optimality gap.  
SDPT3 takes $7$ iterations to solve either formulation.  But the plot clearly shows that 
the condition number of the schur-complement matrix is higher for the conversion SDP.  
This is attributable to the non-uniqueness of the dual multipliers.  The plot shows that conditioning 
for the conversion SDP grows as $O(1/\mu^2)$ as opposed to $O(1/\mu)$ for non-degenerate SDPs.  
This observation is consistent with the analysis of Toh~\cite{Toh04}. 
\begin{figure}
\centering
\includegraphics[scale=0.35]{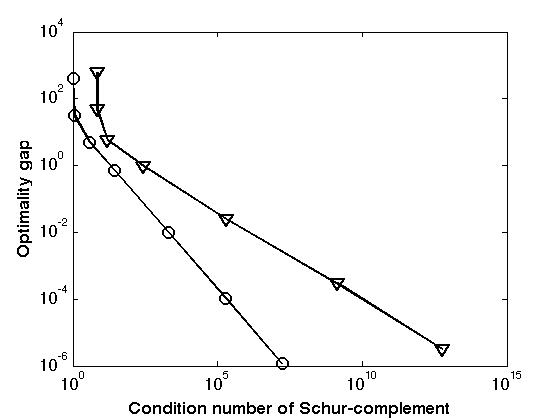}
\caption{Plot of the condition number of the  schur-complement matrix in the IPM against the 
optimality gap.  $\circ$ - original SDP formulation, $\triangle$ - conversion SDP.}\label{fig:illcondn}
\end{figure}

\section{Conclusions \& Future Work}\label{sec:conclusions}

We analyzed the conversion approach for SDP proposed by Fukuda et al~\cite{FukKojMur00}.  
The analysis showed that for SDPs with a low rank solution, the conversion SDP was  
primal degenerate.  We also provided conditions under which the multipliers for the 
conversion SDP were non-unique.  The theory was exemplified using a simple $4 \times 4$ 
SDP.  In the example, the ill-conditioning in the schur-complement matrix was 
greater for the conversion SDP.  Nevertheless, this did not affect the number of iterations to 
reach the said tolerance.  We believe the effect of the ill-conditioning is likely to be 
more dramatic for larger problems and affect convergence of IPM.  This will be investigated 
in a future study.


\end{document}